\newtheorem{definition}{Definition}
\newtheorem{theorem}{Theorem}
\newtheorem{lemma}{Lemma}
\newtheorem{example}{Example}
\def\BibTeX{{\rm B\kern-.05em{\sc i\kern-.025em b}\kern-.08em
    T\kern-.1667em\lower.7ex\hbox{E}\kern-.125emX}}
\begin{document}

\title{Weakly coupled systems
of eikonal equations in path-planning problems\\
}

\author{Maria Teresa Chiri$^{1}$,  Kenneth D Czuprynski$^{2}$ and Ludmil T Zikatanov$^3$
\thanks{*The research of Ludmil Zikatanov is based upon work supported by and while serving at the National Science Foundation. Any opinion, findings, and conclusions or recommendations expressed in this material are those of the author and do not necessarily reflect the views of the National Science Foundation.}
\thanks{$^{1}$ Department of Mathematics and Statistics,
        Queen's University, Kingston, ON, Canada
        {\tt\small maria.chiri@queensu.ca}}%
\thanks{$^{2}$ Applied Research Laboratory, The Pennsylvania State University,
        University Park, PA, USA
        {\tt\small kdc168@psu.edu}}%
        \thanks{$^{3}$ National Science Foundation,
        Alexandria, VA, USA
        {\tt\small  lzikatan@nsf.gov}}%
}

\maketitle

\begin{abstract}
In this paper, we study solutions for a weakly coupled system of eikonal equations arising in  an optimal path-planning problem with random breakdown. The model considered takes into account two types of breakdown for the vehicle, partial and total, which happen at a known, spatially inhomogeneous rate.  In particular, we analyze the complications due to the delicate degenerate coupling condition by using existing results on weakly coupled systems of Hamilton-Jacobi equations. Then we consider finite element method schemes built for convection-diffusion problems to construct approximate solutions for this system and produce some numerical simulations. 
\end{abstract}

\section{Introduction}
In the era of autonomous vehicles, there is an increasing need to develop robust and widely applicable path-planning algorithms. The models introduced so far are meant for a variety of autonomous vehicles which range from planetary exploration rovers to flying drones, underwater vehicles, and of course cars \cite{Reeds}.
 Although the literature on path planning methods used in robotics is already vast (see \cite{Alt, Kav, Lav} for some examples) and the number of models continues to increase over the years, there are still important issues that need to be addressed.  In the case of four-wheeled vehicles, for instance, part of the current research in path planning modeling aims to increase safety and performance by accurately estimating sideslip angles \cite{SELMANAJ20171, 6579735, 9400377}. On the other side, a new challenge, is to derive models taking into account accidental events such as vehicle breakdowns, even in the case of a completely known environment.
In this respect, in \cite{Cornell1} the authors consider an optimal path model in which a vehicle switches between two different modes, representing a partial and a total breakdown condition. The switch happens stochastically at known rates and each mode has its own deterministic dynamics and running cost. Their approach is based on dynamic programming in continuous state and time and leads to globally optimal
trajectories by solving a system of two weakly coupled eikonal equations. This represents a simple problem in the context of the general family of weakly coupled systems of Hamilton-Jacobi equations which appear frequently in the literature of optimal control problems with random switching costs governed by  Markov chains \cite{Dav, Flem, Prot}.  Recently weakly coupled Hamilton-Jacobi systems were studied from a viewpoint of weak KAM theory. For example, \cite{Camilli1} and \cite{Cagn} investigated the large-time behavior of the solution for time-dependent problems. In \cite{Mitake1}, the authors studied homogenization for weakly coupled systems and the rate of convergence to matched solutions while  \cite{Davini1} generalized the notion of Aubry sets for the case of systems and proved comparison principle with respect to their boundary data on Aubry sets.\\
 In this paper we are specifically interested in the system derived in \cite{Cornell1} and we will study it in the framework of viscosity solutions, as previously done in \cite{Engler, Ishi}. In particular, we will prove a comparison principle in the classical sense with the difference that the boundary conditions will be prescribed not only along the perimeter of the domain but also on the Aubry set of the system. On the other side, the existence of solutions is a very delicate issue. We will discuss an example of a one-dimensional weakly coupled system of eikonal equations showing how the lack of boundary conditions on the Aubry set affects the uniqueness of solutions and we will provide boundary conditions for which the system does not admit solutions.\\
Numerically, a range of different methods have been applied to solve the eikonal equations.
Fast marching methods \cite{Sethian1} and fast sweeping methods \cite{Kao, 10342800, parkinson2023efficient}  are efficient algorithms for the causal propagation of boundary information into the domain.
In \cite{Cornell1} the coupled system of eikonal equations is solved using  FMMs within a value iteration scheme which uses policy evaluations to speed up convergence.
A number of finite element formulations have also been considered for the eikonal equations (cf. \cite{Caboussat, Yang,guermond2008fast}).
In \cite{Yang}, an artificial viscosity regularization with a homotopy scheme for the stable reduction of the viscosity is introduced.
High-order discontinuous Galerkin formulations have also been developed for the eikonal equations \cite{Flad} and are amenable to parallelization.
In this work, we consider an artificial viscosity formulation similar to \cite{Yang} but consider finite element schemes built for convection-diffusion problems  \cite{Morton} as means of achieving stable numerical solutions. We point out that numerical simulations will be performed in a case in which the Aubry set is empty to avoid the theoretical complications described above. 
\smallskip

The paper is organized as follows: in Section II we will describe the model introduced in \cite{Cornell1}, in Section III we will prove a comparison principle for viscosity solutions of the weakly coupled system of eikonal equations and show an example of non-uniqueness and non-existence; in Section IV we will derive a numerical scheme based on a finite element method to approximate solutions for this system, while simulations will be discussed in section V. Finally in Section VI we will describe possible future research directions.

\section{The model}
Inspired by \cite{Cornell1}, we consider the problem of optimal path planning for an autonomous vehicle (AV) subject to two potential breakdown events: \\
1. \textit{\bf Partial breakdown} or \textit{\bf Mode 1}, in which the vehicle is damaged but can keep moving toward its target; \\
2. \textit{\bf Total breakdown} or \textit{\bf Mode 2}, in which the vehicle is damaged and needs to move toward a repair depot.\\
The classical formulation of the optimal path planning problem consists of an agent that tries to optimally travel from one point to another while obeying an ordinary differential equation describing its motion. 
In our analysis, we will consider AVs moving on a stretch of road modeled by a rectangle  $\Omega=[0,L]\times[0,S]$ where $L$ represents the length and $S$ is the width. The vehicles’ dynamic is described by the ODE
$$ \dot{y}(t)= d(t)f(y(t)), \quad y(0)=x_0$$
where $d:[0,T]\to S^1$ is  the direction of motion, $f$ is the speed and $y(t)$ is the trajectory starting at $x_0\in \Omega$.
Each breakdown type is assumed to occur at some known rate and accrues some predetermined cost.\\
Denote with $T_b $ the time at which the first partial breakdown occurs, $T_G$ the time at which the AV would reach the target if no breakdown occurred, and with $T_1=\min\lbrace T_b, T_G\rbrace $, then the expected cost in Mode 1 is given by 
\begin{align}\label{C1} J_1(x, a(\cdot))=\mathbb{E}\Big[&\int_0^{T_1}K_1(y(s))+\lambda(y(s))R(y(s))ds \notag \\
&+\chi_{[T_b<T_G]}u_2(y(T_b)) \Big]\,, \end{align}
where $u_2$ is the value function in Mode 2, $K_1$ is the running cost, $\lambda>0$ is the rate at which the partial breakdown occurs (trajectory dependent) and $R$ is the repair cost.
If a partial breakdown occurs before the robot reaches the goal, then the last term in \eqref{C1} is nonzero and the AV switches to Mode 2. 
Similarly, let $T_B$ be the time of the first total breakdown, $T_D$ the time the AV would reach the depot if no breakdown occurred, and denote with $T_2=\min\lbrace T_B, T_D \rbrace$, then the expected cost in Mode 2 is given by 
\begin{align}\label{C2} J_2(x, a(\cdot))=\mathbb{E}\Big[&\int_0^{T_2}K_2(y(s))ds \notag \\
&+\chi_{[T_B<T_D]}\Big( R(y(T_B))+u_1(y(T_B))\Big)\notag\\
&+\chi_{[T_B\geq T_D]}R(y(T_D))+u_1(y(T_D)) \Big]\,,\end{align}
where $K_2$ is the running cost.
The last two terms in \eqref{C2} again encode the optimal cost-to-go whenever a mode switch occurs.
Let $G \subset \Omega$ denote the goal locations for admissible AV trajectories and let $D \subset \Omega$ denote the location of the repair depots that the AV can navigate to after undergoing a partial breakdown. According to the derivation described in \cite{Cartee}, the value functions $u_1$ and $u_2$ solve the following system of eikonal equations:
\begin{equation}\label{eikonal-system-original}
\begin{cases}
|\nabla u_1| f_1 + \phi_1 (u_1-u_2)
	= K_1 + \lambda R, 
\quad &x\in \Omega \\
|\nabla u_2| f_2 + \phi_2(u_2 -u_1)
	= K_2 + \phi_2 R, \quad &x \in \Omega\\
u_1 =0, \quad &x \in G\\
u_2=R+ u_1, \quad &x \in D
\end{cases}
\end{equation}
Observe that $i$-th equation in the system does not depend on $\nabla u_j$ for $i\neq j$; therefore we refer to $\eqref{eikonal-system-original}$ as a weakly coupled system.
The functions $\lambda, \phi_2:\Omega \rightarrow (0,\infty)$ correspond to the rates of exponential random variables modeling the time until another complete breakdown and they are trajectory dependent; the function $\phi_1:\Omega \rightarrow (0,\infty)$ denotes the rate at which partial breakdowns occur. We note that providing a consistent notation to the coupling matrix necessarily leads to an asymmetry in the notation for the rates of the exponential random variables and the rate for the partial breakdowns.
The functions $K_i:\Omega \rightarrow [0,+\infty)$ denote the running cost associated with the cost functional, while $f_i:\Omega\to [0,+\infty) $ are the space dependent speeds.


\section{Viscosity solutions and comparison principle}
In this section, we want to prove a comparison principle for viscosity solutions of the weakly coupled system \eqref{eikonal-system-original}. We start by recalling the definition of viscosity solution introduced in \cite{Visc}, in the context of the system we are considering. 

\smallskip
\begin{definition}A continuous function $u:\Omega\to\mathbb{R}^2$, with $u(x)=(u_1(x),u_2(x))$,
is a {\it viscosity subsolution} (resp. {\it supersolution}) of \eqref{eikonal-system-original}, if for each $i=1,2$ and test function $\varphi$ of class $\mathcal{C}^1$, when $u_i-\varphi$ attains its local maximum (resp. local minimum) at $x$, then 
\begin{equation}\label{ESV}
\begin{aligned}
|\nabla \varphi | f_1+ \phi_1u_1 
	&\leq K_1 + \lambda R + \phi_1 u_2, \\
|\nabla\varphi| f_2 + \phi_2u_2  
	&\leq K_2 + \phi_2 R+ \phi_2u_1, 
\end{aligned}\quad (\hbox{resp.}~\geq~)
\end{equation}
A function is called a {\it viscosity solution} of \eqref{eikonal-system-original} if it is both a viscosity subsolution and supersolution.
\end{definition}
\medskip
A standard assumption in literature for weakly coupled systems of Hamilton-Jacobi equations is the so-called monotonicity condition, i.e. the $i$-th Hamiltonian is increasing in $u_i$ and decreasing in $u_j$ with $j\neq i$. Such a condition is clearly satisfied by \eqref{eikonal-system-original}. Moreover, the Hamiltonians are locally Lipschitz continuous, coercive, and convex in the gradient of $u$. 
On the other side, the coupling matrix for the system is given by 
\begin{equation}\label{coup} D(x)=\begin{pmatrix} \phi_1 & -\phi_1\\ -\phi_2 & \phi_2  \end{pmatrix} \end{equation}which is degenerate (the sum of the element of each row is equal to zero) and irreducible for every $x\in\Omega$, this simply means that coupling is not trivial. Degenerate couplings have been avoided for a long time, since they represented an obstruction to achieving uniqueness of the solution (cf. \cite{Engler}). This was until it was realized this type of coupling induces pathologies on the Aubry set which behaves as a boundary in the inner part of the domain \cite{Roquejoffre, Camilli1} . 
This set, in the particular case of \eqref{eikonal-system-original}, is given by
\begin{align*}
    &\mathcal{A}\doteq\lbrace  x\in\Omega, K_1(x)+\lambda(x)R(x)=0 \rbrace\\&\qquad\qquad\cap \lbrace  x\in\Omega, K_2(x)+\phi_2(x)R(x)=0 \rbrace \end{align*}

 and we may have  $\mathcal{A}\setminus \partial \Omega \neq \emptyset$.
Therefore to prove a comparison principle and consequently to get the uniqueness of the solution, it is essential to add boundary conditions on $\mathcal{A}\cup \partial \Omega$.
 In the following, we will use the notation $USC(\Omega)$ and $LSC(\Omega)$ for the set of upper semicontinuous and lower semicontinuous functions on $\Omega$.

\smallskip
\begin{theorem}\label{CP}[Comparison Principle]
Let $\phi_1, \phi_2, \lambda, R:\Omega\to [0,\infty)$ be continuous functions. Let $u\in USC(\Omega)$ and $v\in LSC(\Omega)$  be respectively bounded subsolutions and supersolution of \eqref{eikonal-system-original}.
Assume that $u(x)\leq v(x)$ for every $x\in \mathcal{A}\cup \partial \Omega$, then $u(x)\leq v(x)$ on $\bar\Omega$

\end{theorem}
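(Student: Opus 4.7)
The plan is to adapt the classical doubling-of-variables argument for viscosity solutions to the weakly coupled system \eqref{eikonal-system-original}, with a multiplicative perturbation of $u$ designed to recover strict monotonicity in spite of the fact that the coupling matrix $D(x)$ has zero row sums. First I would introduce, for $\theta\in(0,1)$ and a constant $c\le\inf_{\bar\Omega}v$ (finite by boundedness and lower semicontinuity of $v$), the perturbed subsolution $u^\theta_i=(1-\theta)u_i+\theta c$. A direct test-function computation (using the renormalization $\varphi\mapsto(1-\theta)\varphi+\theta c$ and the identity $u^\theta_1-u^\theta_2=(1-\theta)(u_1-u_2)$) shows that $u^\theta$ satisfies in the viscosity sense
\begin{align*}
|\nabla\varphi|f_1+\phi_1(u_1^\theta-u_2^\theta)&\le K_1+\lambda R-\theta(K_1+\lambda R),\\
|\nabla\varphi|f_2+\phi_2(u_2^\theta-u_1^\theta)&\le K_2+\phi_2 R-\theta(K_2+\phi_2 R),
\end{align*}
with the margins vanishing exactly on $\mathcal{A}$. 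The choice of $c$ also preserves the boundary hypothesis $u^\theta_i\le v_i$ on $\mathcal{A}\cup\partial\Omega$. Proving $u^\theta_i\le v_i$ on $\bar\Omega$ for every $\theta\in(0,1)$ and then letting $\theta\to 0^+$ will deliver the conclusion.

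Assume toward contradiction that $M:=\max_i\max_{\bar\Omega}(u_i^\theta-v_i)>0$; by upper semicontinuity the maximum is attained at some $x^*\in\bar\Omega$, and the boundary hypothesis forces $x^*\in\Omega\setminus(\mathcal{A}\cup\partial\Omega)$. Taking WLOG $i_0=1$, I would then double variables with a localizing penalty: for small $\epsilon,\eta>0$, consider
\[
\Phi_{\epsilon,\eta}(x,y)=u_1^\theta(x)-v_1(y)-\frac{|x-y|^2}{2\epsilon}-\eta|x-x^*|^2
\]
with maximizer $(x_\epsilon,y_\epsilon)$ over $\bar\Omega\times\bar\Omega$. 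Standard estimates give $x_\epsilon,y_\epsilon\to x^*$ and $|x_\epsilon-y_\epsilon|^2/\epsilon\to 0$, while $|p_\epsilon|:=|x_\epsilon-y_\epsilon|/\epsilon$ remains bounded thanks to coercivity of the eikonal Hamiltonian in $p$. Applying the strict subsolution inequality for $u^\theta$ at $x_\epsilon$ (with gradient $p_\epsilon+2\eta(x_\epsilon-x^*)$), the supersolution inequality for $v$ at $y_\epsilon$ (with gradient $p_\epsilon$), subtracting and passing to the limit $\epsilon\to 0$ then $\eta\to 0$ (using continuity of the coefficients together with the one-sided USC/LSC convergence) yields
\[
\phi_1(x^*)\big[(u_1^\theta-v_1)(x^*)-(u_2^\theta-v_2)(x^*)\big]\le-\theta\big[K_1(x^*)+\lambda(x^*)R(x^*)\big].
\]

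The proof closes by a dichotomy at $x^*$. The bracket on the left is non-negative since $(u_1^\theta-v_1)(x^*)=M$ is the global maximum over both components. If $K_1+\lambda R>0$ at $x^*$, the right-hand side is strictly negative, a contradiction. Otherwise $K_1+\lambda R=0$ at $x^*$ and the inequality forces $(u_2^\theta-v_2)(x^*)=M$ as well, so $x^*$ is also a global maximum for the second component; repeating the doubling-of-variables step verbatim for index $2$ gives
\[
\phi_2(x^*)\big[(u_2^\theta-v_2)(x^*)-(u_1^\theta-v_1)(x^*)\big]\le-\theta\big[K_2(x^*)+\phi_2(x^*)R(x^*)\big],
\]
whose left-hand side is now zero while the right-hand side is strictly negative: indeed, $x^*\notin\mathcal{A}$ and $K_1+\lambda R$ already vanishes there, so the definition of $\mathcal{A}$ forces $K_2+\phi_2 R>0$ at $x^*$. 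The main obstacle throughout is precisely the degeneracy of $D(x)$: constant shifts $u\mapsto u-\theta$ leave the coupling invariant and hence do not produce strict subsolutions, which is why both the multiplicative perturbation (whose margins vanish exactly on $\mathcal{A}$) and the boundary prescription on the full set $\mathcal{A}\cup\partial\Omega$ are essential for closing both branches of the dichotomy.
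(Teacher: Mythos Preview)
Your argument is correct and follows essentially the same route as the paper's proof: a multiplicative perturbation of the subsolution (your $u^\theta=(1-\theta)u+\theta c$ versus the paper's $\delta u$) to create the strict margin $-\theta(K_i+\cdots)$, followed by the standard doubling of variables with a localizing penalty, and a case analysis that exploits the definition of $\mathcal{A}$ to obtain a contradiction away from $\mathcal{A}\cup\partial\Omega$. The only cosmetic differences are that your shift by $\theta c$ guarantees $u^\theta\le v$ on $\mathcal{A}\cup\partial\Omega$ upfront (so the maximum is automatically interior and off $\mathcal{A}$, removing the paper's Case~1), and you organize the dichotomy by whether $K_1+\lambda R$ vanishes at $x^*$ rather than by whether both indices attain the maximum; the two organizations are equivalent once one observes, as you do, that $K_1+\lambda R=0$ at $x^*$ forces the second index to also attain $M$.
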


\begin{proof}
Consider $\delta\in(0,1)$ an set
$$M_\delta\doteq \sup_{[0,L]\times[0,S]}\sup_{i=1,2}\lbrace \delta u_i-v_i \rbrace $$

If $M_\delta\leq 0$ there is nothing to prove, therefore we assume it is strictly positive. 
By compactness the $sup$ is a maximum achieved at a point $x_0\in[0,L]\times[0,S]$ for some $i_0$.
Denote with $\mathcal{I}$ the set of the index for which the maximum is achieved. We can distinguish three cases.\\
\textbf{Case 1.} $\mathcal{I}=\lbrace 1, 2\rbrace$ and $x_0\in\mathcal{A}\cup \partial \Omega$. In this case we have 
\begin{align*}
M_\delta
& =\frac{1}{2}\big[(\delta u_1-v_1)(x_0)+(\delta u_2-v_2)(x_0)\big]\\
&\leq\frac{1}{2}(\delta-1)\big(v_1(x_0)+v_2(x_0)\big)\\
&\leq(1-\delta)|v|_\infty.
\end{align*}
\textbf{Case 2.} $\mathcal{I}=\lbrace 1, 2\rbrace$ and $x_0\notin\mathcal{A}\cup\partial \Omega$. For instance, we can assume without loss of generality that $K_1(x_0)+\lambda(x_0)R(x_0)\neq 0$. In this case, we proceed classically by considering 
$$ \sup_{\Omega\times\Omega}\Big\{ \delta u_1(x)-v_1(y)-\frac{|x-y|^2}{2\varepsilon^2}-|x-x_0|^2\Big \}. $$
Let $(\bar{x},\bar{y})$ be the point at which this maximum is achieved. This value is clearly greater than $M_\delta$, moreover, the following properties are satisfied:
$$ \bar{x}, \,\bar{y} \to x_0\,\qquad \frac{|x-y|^2}{2\varepsilon^2},\,|\bar{x}-x_0|^2\to 0\quad \hbox{ as }\varepsilon\to 0.  $$
Let $p_\varepsilon=\frac{\bar{x}-\bar{y}}{\varepsilon^2}$. Since $u_1$ is viscosity subsolution of \eqref{eikonal-system-original}, we get 
\begin{align*}\Big\vert {p_\varepsilon+2 (\bar{x} - x_0)}\Big \vert f_1(\bar{x})+\delta\phi_1(\bar{x})&(u_1(\bar{x})-u_2(\bar{x}))\\ 
&\leq \delta \big(K_1(\bar{x})+\lambda(\bar{x})R(\bar{x})\big). \end{align*}
On the other side,  since $v_1$ is viscosity supersolution 
$$ \vert p_\varepsilon\vert f_1(\bar{y} )+\phi_1(\bar{y})(v_1(\bar{y})-v_2(\bar{y}))\geq\big(K_1(\bar{y})+\lambda(\bar{y})R(\bar{y})\big).$$
By subtracting the two inequalities we first get
\begin{align*} &\Big\vert {p_\varepsilon+2 (\bar{x} - x_0)}\Big \vert  f_1(\bar{x})- \vert p_\varepsilon\vert  f_1(\bar{y} )\\
&=\Big(\Big\vert {p_\varepsilon+2 (\bar{x} - x_0)}\Big \vert -\vert p_\varepsilon\vert\Big)  f_1(\bar{x} )+\vert p_\varepsilon\vert \big( f_1(\bar{x} ) - f_1(\bar{y} )\big)
  \end{align*}
which goes to $0$ as $\varepsilon\to 0$, and
\begin{align*}\delta\phi_1(\bar{x})&\big(u_1(\bar{x})-u_2(\bar{x})\big)-\phi_1(\bar{y})\big(v_1(\bar{y})-v_2(\bar{y})\big)\\
&=\phi_1(\bar{x})\big(\delta u_1(\bar{x})-\delta u_2(\bar{x})-v_1(\bar{y})+v_2(\bar{y})\big)\\
&\qquad \qquad +(\phi_1(\bar{x})-\phi_1(\bar{y}))(v_1(\bar{y})-v_2(\bar{y}))
\end{align*}
where 
$$(\phi_1(\bar{x})-\phi_1(\bar{y}))(v_1(\bar{y})-v_2(\bar{y}))\to 0 \qquad \hbox{as }\quad \varepsilon\to 0. $$
Therefore we get
\begin{align*} \phi_1(\bar{x})&\big(\delta u_1(\bar{x})-\delta u_2(\bar{x})-v_1(\bar{y})+v_2(\bar{y})\big)\\
&\leq (\delta-1)\big(K_1(\bar{x})+\lambda(\bar{x})R(\bar{x})\big)+o_\varepsilon(1).\end{align*}
Since $\delta u_j-v_j$ is upper-semicontinuous for $j=1,2$, it follows that
\begin{equation}\label{diff} \limsup_{\varepsilon\to0}(\delta u_j(\bar{x})-v_j(\bar{y}) )\leq \delta u_j(x_0)-v_j(x_0)\leq M_\delta. \end{equation}
hence
$$-\phi_1(\bar{x})(\delta u_2(\bar{x})-v_2(\bar{y}))\geq -\phi_1(\bar{x})M_\delta+o_\varepsilon(1) .$$
Moreover we have $\delta u_1(\bar{x})-v_1(\bar{y})\geq M_\delta$,
therefore
$$ \phi_1(\bar{x})(\delta u_1(\bar{x})-v_1(\bar{y}))\geq\phi_1(\bar{x})M_\delta. $$

We can conclude that 
$$ (\delta-1)\big(K_1(\bar{x})+\lambda(\bar{x})R(\bar{x})\big)\geq 0$$

which leads to contraddiction when $\varepsilon\to 0$ since $x_0\notin\mathcal{A}$.

\textbf{Case 3.} $\mathcal{I}\neq\lbrace 1, 2\rbrace$.
We can argue almost like in Case 2, but the estimate needs to be more rigorous on the index not in $\mathcal{I}$. Indeed let's assume that $\mathcal{I}=\lbrace{1\rbrace}$. 
The previous computations still hold, but for \eqref{diff}, if $j=2$ 
$$\limsup_{\varepsilon\to 0} (\delta u_2(\bar{x})-v_2(\bar{y}) )\leq \delta u_2(x_0)-v_2(x_0)\leq M_\delta-\eta$$
for some $\eta>0$. It follows that 
$$\phi_1(x_0)\eta\leq (\delta-1)\big(K_1(\bar{x})+\lambda(\bar{x})R(\bar{x})\big)+o_\varepsilon(1)$$
which again leads to contradiction as $\varepsilon\to 0$.

We can conclude that the only possibility is $M_\delta \leq(1-\delta)|v|_\infty $ which in the case of $\delta=1$ implies $M_1\leq0$.
\end{proof}
\medskip

\begin{lemma} Let $u\in USC(\Omega)$ be a bounded viscosity subsolution of \eqref{eikonal-system-original}, then $u$ is Lipschitz continuous on $\Omega$.
\end{lemma}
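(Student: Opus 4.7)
The plan is to exploit the coercivity of the Hamiltonians in the gradient variable, together with boundedness of $u$, to first upgrade the subsolution inequalities \eqref{ESV} into a uniform ``$|\nabla u_i|\leq L$'' bound in the viscosity sense, and then convert that bound into a pointwise Lipschitz estimate by a standard test-function argument with $\varphi(y) = L'|y-x_0|$.

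First, I would observe that whenever $u_i - \varphi$ attains a local maximum at a point $x \in \Omega$, the inequalities \eqref{ESV} combined with the elementary bound $|\phi_i(x)(u_i(x) - u_j(x))| \leq 2\|\phi_i\|_\infty \|u\|_\infty$ yield
\[
|\nabla \varphi(x)|\, f_i(x) \leq \|K_i\|_\infty + \|\lambda R\|_\infty + \|\phi_i R\|_\infty + 2\|\phi_i\|_\infty \|u\|_\infty,
\]
so that $|\nabla \varphi(x)| \leq L$ for an explicit constant $L$ depending only on the data and on $\|u\|_\infty$, provided $\inf_\Omega f_i > 0$ (the standing coercivity assumption on the speeds). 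To convert this into a Lipschitz estimate, I would fix any $L' > L$ and any $x_0 \in \Omega$, and consider $y \mapsto u_i(y) - L'|y - x_0|$. Being USC on the compact rectangle, it attains its maximum at some $y^*$. If $y^* = x_0$ then $u_i(y) - u_i(x_0) \leq L'|y - x_0|$ for every $y$, which is the desired bound; if $y^* \neq x_0$, then $\varphi(y) := L'|y - x_0|$ is $C^1$ at $y^*$ with $|\nabla \varphi(y^*)| = L'$, so the first step forces $L' \leq L$, contradicting the choice of $L'$. Sending $L' \downarrow L$ and interchanging the roles of $x_0$ and $y$ then gives $|u_i(x) - u_i(y)| \leq L|x - y|$ for both components.

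I expect the main technical obstacle to be the case where the maximizer $y^*$ lies on $\partial\Omega$, where the viscosity subsolution property alone gives no information. This is handled by the a priori estimate
\[
L'|y^* - x_0| \leq u_i(y^*) - u_i(x_0) \leq 2\|u\|_\infty,
\]
which localizes $y^*$ to a disc of radius $2\|u\|_\infty/L'$ around $x_0$: given any $x_0$ with interior distance $\geq \delta$ from $\partial\Omega$, choosing $L' > \max(L,\, 2\|u\|_\infty/\delta)$ forces the maximizer to remain interior, so the contradiction does apply. This yields locally uniform Lipschitz continuity on $\Omega$, which upgrades to a global Lipschitz bound by convexity of the rectangle. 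A cleaner alternative, should the boundary argument become cumbersome, is to first regularize $u_i$ by sup-convolution (which is semiconvex and an ``almost'' subsolution of a slightly perturbed system), run the proof on the regularization, and pass to the limit as the regularization parameter tends to zero.
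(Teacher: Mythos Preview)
Your proposal is correct and follows essentially the same approach as the paper: bound $|\nabla u_i|f_i$ by a constant using the subsolution inequality together with boundedness of the data and of $u$, and then conclude Lipschitz continuity. The paper's proof is a two-line sketch that simply asserts ``$|\nabla u_i|f_i(x)\le C_i$, which implies that $u$ is Lipschitz,'' whereas you spell out the standard cone-test-function argument (and the boundary localization) that underlies this implication; you also make explicit the coercivity hypothesis $\inf_\Omega f_i>0$, which the paper uses tacitly.
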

\begin{proof} Since $\Omega$ is a compact set, we have that $K_1, K_2, \phi_1,\phi_2, \lambda$ are bounded.  Therefore
$$ |\nabla u_i|f_i(x)\leq C_i \qquad \hbox{i=1,2}$$
which implies that $u$ is Lipschitz. 
\end{proof}
\medskip
The comparison principle proved in Theorem \ref{CP}  implies that if a solution exists, it must be unique. On the other hand, it is not clear which type of boundary conditions on $\partial\Omega$ and $\mathcal{A}$ ensure the existence of viscosity solutions. We borrow an example from \cite{Camilli1} to illustrate how the set $\mathcal{A}$ can affect the uniqueness of solutions for a weekly coupled system. Moreover, we provide bundary conditions on $\mathcal{A}$  for which such a system has no solution.\\
\begin{example} Consider the one-dimensional problem  
\begin{equation}\label{EX}
\begin{cases}
|u'_1(x)|+u_1(x)-u_2(x)=F(x),\quad &x\in(-1,1)\\
|u'_2(x)|+u_2(x)-u_1(x)=F(x)\quad &x\in(-1,1)\\
u_i(\pm 1)=0\quad & i=1,2
\end{cases}
\end{equation}
with $F(x)=2|x|$. It is immediate to check that both $u_1(x)=u_2(x)= 1-x^2$ and $u_1(x)=u_2(x)=\min\lbrace 1-x^2, x^2-C\rbrace $, $C\in(0,1)$, are viscosity solutions of \eqref{EX}. Here the set $\mathcal{A}$ is given by $\lbrace 0\rbrace$,  hence without prescribing a boundary condition in $0$ we do not have uniqueness. If we just add the condition $u_i(0)=\frac{1}{2}$, this immediately selects the unique solution $u_1(x)=u_2(x)=\min\lbrace 1-x^2, x^2-\frac{1}{2}\rbrace$. On the other side for $u_i(0)=2$, no solution satisfies \eqref{EX}.
\end{example}

\section{Numerical Solution}
In this section the numerical solution of the weakly coupled system \eqref{eikonal-system-original} using the finite element method (FEM) is considered.
The numerical scheme will solve a sequence of systems augmented by artificial viscosity terms.
Formally, we have the problem: Find $u^\varepsilon_1$ and $u^\varepsilon_2$ such that
\begin{equation}\label{eikonal-system-augmented}
\begin{aligned}
-\varepsilon \Delta u_1^\varepsilon 
+ |\nabla u^\varepsilon_1| f_1 
+ \phi_1 u^\varepsilon_1
&= K_1 + \lambda R + \phi_1u^\varepsilon_2,  \\
-\varepsilon \Delta u^\varepsilon_2 +  |\nabla u^\varepsilon_2| f_2 + \phi_2u^\varepsilon_2 
	&= K_2 + \phi_2 R+ \phi_2u^\varepsilon_1,
\end{aligned}
\end{equation}
for $x \in \Omega$ with $u^{\varepsilon}_1 = 0$ for $x \in G$ and $u^{\varepsilon}_2 = R + u^{\varepsilon}_1$ for $x \in D$, as $\varepsilon \rightarrow 0$.
Although the viscosity term provides some mechanism for control of the solution, we are still left to deal with the original nonlinearity which is accomplished through linearization as in \cite{Yang}.
Let  $u^{\varepsilon_n}_k$ denote the solution of \eqref{eikonal-system-augmented} for the $n^{th}$ value of $\varepsilon$, where $k=1,2$.
Linearization of the absolute value term results in the approximation 
\begin{align} \label{eqn:linearization}
|\nabla u^{\varepsilon_n}_{k}| 
& \approx |\nabla u^{\varepsilon_{n-1}}_{k}| 
+\frac{\nabla u^{\varepsilon_{n-1}}_{k}}{|\nabla u^{\varepsilon_{n-1}}_{k}|} \left(\nabla u^{\varepsilon_{n}}_{k}- \nabla u^{\varepsilon_{n-1}}_{k}\right) \nonumber \\
&\approx \frac{\nabla u^{\varepsilon_{n-1}}_{k} }{|\nabla u^{\varepsilon_{n-1}}_{k}|} \nabla u^{\varepsilon_n}_{k},
\end{align}
for $\nabla u ^{\varepsilon_{n-1}}_{k}$ not identically zero.
Inserting the approximation \eqref{eqn:linearization} into \eqref{eikonal-system-augmented} leads to the following iteration. Let $u^{\varepsilon_0}_1$ and $u^{\varepsilon_0}_2$ denote initial guesses and let $\beta^{\varepsilon_{n-1}}_k$ denote the normalization of $\nabla u^{\varepsilon_{n-1}}_{k}$, then for $n = 1,2,\dots$, we solve
\begin{equation}\label{eikonal-system-w-linearization}
\begin{aligned}
-\varepsilon_n \Delta u^{\varepsilon_n}_1
+ \beta^{\varepsilon_{n-1}}_1\cdot \nabla u^{\varepsilon_n}_1 f_1 
+ \phi_1 u^{\varepsilon_n}_1
&= K_1 + \lambda R + \phi_1u^{\varepsilon_n}_2  \\
-\varepsilon_n \Delta u^{\varepsilon_n}_2 
+  \beta^{\varepsilon_{n-1}}_2 \cdot\nabla u^{\varepsilon_n}_2 f_2 + \phi_2u^{\varepsilon_n}_2 
	&= K_2 + \phi_2 R+ \phi_2u^{\varepsilon_n}_1 
\end{aligned}
\end{equation}
for $x \in \Omega$ with $u^{\varepsilon_n}_1 = 0$ for $x \in G$ and $u^{\varepsilon_n}_2 = R + u^{\varepsilon_n}_1$ for $x \in D$.
Equation \eqref{eikonal-system-w-linearization} now resembles a convection-diffusion problem for which a number of numerical schemes exist.
In this work, we formulate the discrete system via a streamline diffusion approximation.

\subsection{Streamline Diffusion}
For the discretization of equation \eqref{eikonal-system-w-linearization} we use a streamline diffusion approximation~\cite{Brooks1,Johnson} and in our derivation we follow the settings in~\cite{Bank1}.
We seek an approximate solution which is a piecewise linear, continuous function on a given simplicial mesh $\mathcal{T}_h$   covering exactly the computational domain $\Omega$, that is, 
$\overline{\Omega} = \bigcup_{T\in\mathcal{T}_h} T$. We also assume that the sets $G$ and $D$ are partitioned by the mesh exactly. Then, for $\Gamma$ denoting either of the sets $G$ or $D$, the piecewise linear continuous finite element space is given by introducing
\begin{align*}
V_{\Gamma;h} :=
\left\lbrace
v \in C(\overline{\Omega}) \; | \;
v|_T \in P_1(T), \; \forall T \in \mathcal{T}_h, 
v|_{\Gamma} = 0
\right\rbrace
\end{align*}
where $P_1(T)$ denotes the space of polynomials of degree $\le 1$ over $T$. For a given piecewise linear $g$ which takes prescribed values on $\Gamma$ the numerical solutions lie in the affine spaces $V_\Gamma(h)=g+V_{h;\Gamma}$, $\Gamma=D$ or $\Gamma=G$.  Thus, the solution spaces for $u_1$ and $u_2$ differ slightly and are given by $V_G(h)$ and $V_D(h)$, respectively.

For ease of presentation in the derivation of the weak form of \eqref{eikonal-system-w-linearization}, we suppress the superscript notation as well as the iteration variable $n$.
The test functions for the streamline diffusion formulation have the form $v + \theta h \nu\beta\cdot \nabla v$ for $v \in V_\Gamma(h)$, where $\nu = (|\beta|^2 +1)^{-1/2}$. 
The variable $h$ denotes the mesh size and $\theta$ is a positive constant.
Consider the first equation in \eqref{eikonal-system-w-linearization}, then for $v_1 \in V_D(h)$ multiplying by a test function and integrating yields
\begin{equation}\label{eqn:weak-form-derivation-1}
\begin{aligned}
\left(-\varepsilon\Delta u_1 + \beta_1 \cdot \nabla u_1 f_1 
+ \phi_1 u_1, v_1 + \theta h \nu\beta_1\cdot \nabla v_1\right)&  \\
&\hspace*{-2.5in}= 
\left(\phi_1u_2, v_1 + \theta h \nu\beta_1\cdot \nabla v_1\right)\\ 
&\hspace*{-2in}+
\left(K_1 + \lambda R, v_1 + \theta h \nu\beta_1\cdot \nabla v_1\right)
\end{aligned}
\end{equation}
where $(u,v) = \int_{\Omega} uv d\omega$.
We remark that because $v \in V_\Gamma(h)$, the components of $\nabla v$ are constant on each element and therefore integrations involved in calculating the form $(w, \theta h \nu\beta\cdot \nabla v)$ are done elementwise.
Working from right to left, define the linear functional $l_1(\cdot)$ by
\begin{align} 
\label{eqn:weak-form-derivation-2}
l_1(v_1) := \left(K_1 + \lambda R, v_1 + \theta h \nu\beta_1\cdot \nabla v_1\right)\end{align}
and the bilinear form $b_1(\cdot, \cdot)$ by
\begin{align} 
\label{eqn:weak-form-derivation-3}
b_1(u_2,v_1) := \left(\phi_1u_2, v_1 + \theta h \nu\beta_1\cdot \nabla v_1\right).
\end{align}
The final term can be split into the sum of the viscosity term and its remainder.
Then for $\beta_1 = ([\beta_1]_1, [\beta_1]_2)^T$ the  introduction of the matrix
\begin{align*}
{\bf B}_1 = \begin{pmatrix}
[\beta_1]_1^2 & [\beta_1]_1[\beta_1]_2 \\
[\beta_1]_1[\beta_1]_2&   [\beta_1]_2^2
\end{pmatrix}f_1
\end{align*}
allows us to represent the next term as
\begin{equation}\label{eqn:weak-form-derivation-4}
\begin{aligned}
\left(\beta_1 \cdot \nabla u_1 f_1 
+ \phi_1 u_1, v_1 + \theta h \nu\beta_1\cdot \nabla v_1\right)&\\
&\hspace{-2in}=
\theta h \nu \left(\nabla u_1 ,{\bf B}_1 \nabla v_1\right)
+
\left(\beta_1\cdot \nabla u_1 f_1 , v_1\right)\\
+
\left( \phi_1 u_1, v_1 + \theta h \nu\beta_1\cdot \nabla v_1\right).
\end{aligned}
\end{equation}
Performing integration by parts on the viscosity term yields
\begin{equation}\label{eqn:weak-form-derivation-5}
\begin{aligned}
-\varepsilon \left( \Delta u_1, v_1 + \theta h \nu\beta_1\cdot \nabla v_1\right)&
\\&\hspace*{-1.5in}=
\varepsilon(\nabla u_1, \nabla v_1) - \varepsilon\left(\nabla u_1, v_1+ \theta h^p \nu\beta_1\cdot \nabla v_1\right)_{\partial \Omega }
\end{aligned}
\end{equation}
where $(\nabla u , v)_\Gamma = \int_{\Gamma} v \nabla u \cdot n dS$.
We note that due to the definition of $V_{\Gamma}(h)$, all higher derivatives of $\nabla v_1$ and $\beta_1$ are zero for any given element.
In this case, integration by parts results in an additional term due to the set $D$ not necessarily being the entire (or part) of the boundary of the domain.
By letting ${\bf K}_1 = {\bf I} \varepsilon + \theta h \nu  {\bf B}_1$
we can define the bilinear form $a_1(\cdot, \cdot)$ as
\begin{equation}\label{eqn:weak-form-derivation-6}
\begin{aligned}
a_1(u_1,v_1)
&:=
\left(\nabla u_1 ,{\bf K}_1 \nabla v_1\right)
+
\left(\beta_1\cdot \nabla u_1 f_1 , v_1\right)\\
&\hspace*{.25in}+
\left( \phi_1 u_1, v_1 + \theta h \nu\beta_1\cdot \nabla v_1\right) \\
&\hspace*{.25in}- \varepsilon\left(\nabla u_1, v_1+ \theta h \nu\beta_1\cdot \nabla v_1\right)_{\partial \Omega }
\end{aligned}
\end{equation}
which represents the combination of \eqref{eqn:weak-form-derivation-4} and \eqref{eqn:weak-form-derivation-5}.
Using the linear and bilinear forms defined above, equation \eqref{eqn:weak-form-derivation-1} becomes
\begin{align*} 
a_1(u_1,v_1) - b_1(u_2,v_1) 
 &= l_1(v_1), \quad \forall v_1 \in V_D(h).
\end{align*}
This provides the weak form for the first equation in \eqref{eikonal-system-w-linearization}.
By following the same derivation and introducing analogous bilinear and linear forms we arrive at the weak form of  \eqref{eikonal-system-w-linearization}:
find $(u_1, u_2) \in (V_D, V_G)$ such that
\begin{equation}\label{eqn:coupled-weak-form}
\begin{aligned}
	a_1(u_1,v_1) - b_1(u_2,v_1)
 &= l_1(v_1), \quad \forall v_1 \in V_D(h) \\
 a_2(u_2,v_2) - b_2(u_1,v_2)
 &= l_2(v_2), \quad \forall v_2 \in V_G(h). 
\end{aligned}
\end{equation}
The system \eqref{eqn:coupled-weak-form} then represents our finite element formulation for a given $\varepsilon$.
This results in a single linear system which solves for the epsilon dependent $u_1$ and $u_2$ simultaneously using the algebraic multilevel solver Multigraph 2.1 (cf. \cite{Bank2, Bank3}).
We remark that this is in contrast to the value iteration approach in \cite{Cornell1} which iterates between each equation in the coupled system.
In our setting, the only iteration is due to decreasing $\varepsilon$.

\section{Numerical Example}
We consider the road scenario depicted in Figure \ref{fig:road-scene}.
Here, a pair of disabled vehicles are present on the shoulder of the road due to a collision.
\begin{figure}
    \centering
    \includegraphics[scale=0.35]{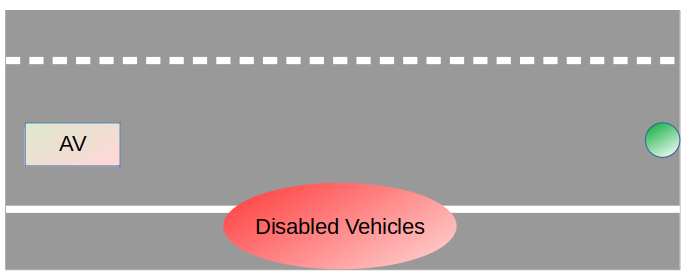}
    \caption{Road scenario} 
    \label{fig:road-scene}
    \vspace{-10pt}
\end{figure}
\begin{figure}
    \centering
    \includegraphics[scale=0.38]{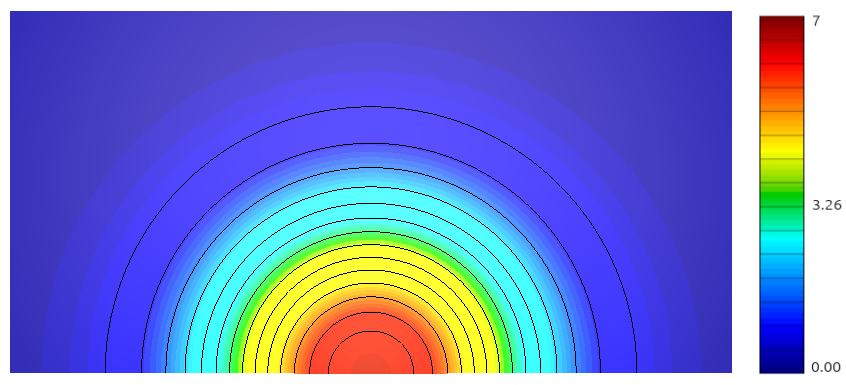}
    \caption{Counter plot of the spatially dependent likelihood of breakdown function $\phi(x)$ associated with the road layout in Figure \ref{fig:road-scene}.} 
    \label{fig:gaussian}
    \vspace{-10pt}
\end{figure}
The assumption is that a breakdown is more likely in regions near the pair of disabled vehicles due to potential debris etc. which the AV must account for in navigating to the goal location indicated by the green dot;
in this case, we assume the goal location and repair depot coincide. 
This increased likelihood of a breakdown is represented in the model by the function $\phi(x)$ given in Figure \ref{fig:gaussian}.
For the above scenario we define  $\Omega = [0,2] \times [0,1]$, let $G = D = (1.9, 0.5)$, and uniformly refine a total of seven times.
For the remaining parameters, we follow \cite{Cornell1} and set $f_1 =1$, $f_2 = 0.2$,  $R = 1$, $K_1=1$, $K_2 = 1$ and define $\phi(x) = 7 e^{-5(x-1)^2 - 5y^2}$ which is plotted above.
Additionally, we take $\lambda = 1$ and  $\phi_2 (x) = 3$.
Figures \ref{fig:u1} and \ref{fig:u2} display the contour plots associated with the value functions which correspond to vehicle operation in modes 1 and 2, respectively.
A trajectory from any starting point $(x_0, y_0)$ is obtained by moving in the direction of the gradient.
The contours show that the optimal trajectories move away from the disabled vehicles as expected.
\begin{figure}
    \centering
    \includegraphics[scale=0.35]{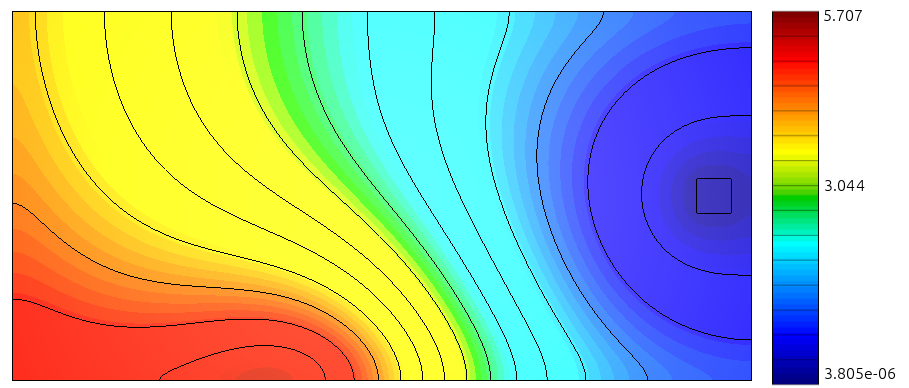}
    \caption{Value function $u_1$  corresponding to normal operation.} 
    \label{fig:u1}
    \vspace{-10pt}
\end{figure}
\begin{figure}
    \centering
    \includegraphics[scale=0.36]{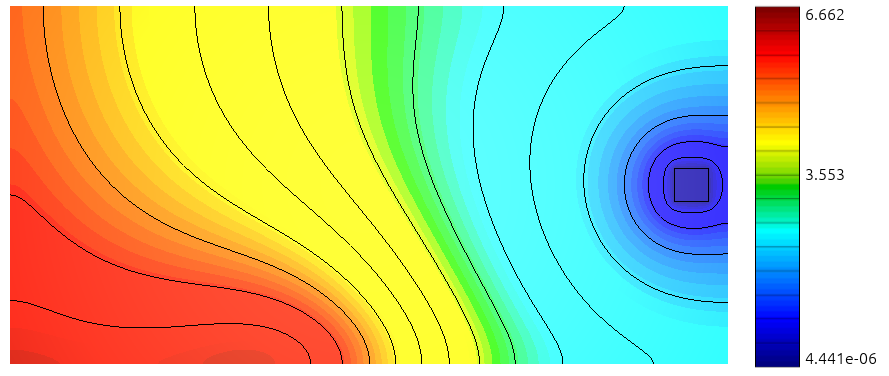}
    \caption{Value function $u_2$ which corresponds to operation after a partial breakdown.} 
    \label{fig:u2}
    \vspace{-10pt}
\end{figure}

\section{Conclusions and future work}
In this work, we considered the model of optimal path planning with random
breakdowns introduced in \cite{Cornell1} and analyzed from a theoretical point of view. We proved a comparison principle for viscosity solutions of the weakly coupled system of eikonal equations of the value functions. In particular, we described how to bypass the non-trivial degenerate coupling condition \eqref{coup}. Then we showed an example of how the lack of boundary conditions on the Aubry set can compromise the uniqueness of the solution.  Finally, in the same example, we have provided boundary conditions for which the system does not have solutions. It still remains an open question how to choose boundary conditions to ensure the existence of a solution. Indeed, the few existence results proved for weakly coupled systems require that the coupling matrix is non-degenerate \cite{Engler}, or that the Aubry set is empty \cite{Camilli}, or that the domain is a torus \cite{Camilli1}.

Future direction may include the extension of the model in \cite{Cornell1} to domains topologically more challenging, like networks, assuming not only that the vehicle is subject to breakdowns but also that the speed varies because of heterogeneous road conditions or the presence of lights at the nodes.  It would be also interesting to revisit the model introduced in \cite{Cornell1}  incorporating realistic features such as the uncertainty of road conditions and analysis of sideslip angles in the spirit of recent literature \cite{SELMANAJ20171, 6579735, 9400377}. 
Numerically, this work took advantage of finite element schemes used in convection-diffusion problems by iteratively solving a linearized form of the system augmented with artificial viscosity \eqref{eikonal-system-w-linearization}; specifically, the streamline diffusion approximation was used as a means of obtaining stable numerical solutions at each iteration.
A number of schemes exist for such problems and the exploration of their utility in the context of the coupled system considered in this paper is one potential direction for future work.

\bibliographystyle{IEEEtran}
\bibliography{references}

\begin{thebibliography}{10}
\providecommand{\url}[1]{#1}
\csname url@samestyle\endcsname
\providecommand{\newblock}{\relax}
\providecommand{\bibinfo}[2]{#2}
\providecommand{\BIBentrySTDinterwordspacing}{\spaceskip=0pt\relax}
\providecommand{\BIBentryALTinterwordstretchfactor}{4}
\providecommand{\BIBentryALTinterwordspacing}{\spaceskip=\fontdimen2\font plus
\BIBentryALTinterwordstretchfactor\fontdimen3\font minus
  \fontdimen4\font\relax}
\providecommand{\BIBforeignlanguage}[2]{{%
\expandafter\ifx\csname l@#1\endcsname\relax
\typeout{** WARNING: IEEEtran.bst: No hyphenation pattern has been}%
\typeout{** loaded for the language `#1'. Using the pattern for}%
\typeout{** the default language instead.}%
\else
\language=\csname l@#1\endcsname
\fi
#2}}
\providecommand{\BIBdecl}{\relax}
\BIBdecl

\bibitem{Reeds}
J.~Reeds and L.~Shepp, ``Optimal paths for a car that goes both forwards and
  backwards,'' \emph{Pacific journal of mathematics}, vol. 145, no.~2, pp.
  367--393, 1990.

\bibitem{Alt}
K.~Alton and I.~M. Mitchell, ``Optimal path planning under different norms in
  continuous state spaces,'' in \emph{Proceedings 2006 IEEE International
  Conference on Robotics and Automation, 2006. ICRA 2006.}\hskip 1em plus 0.5em
  minus 0.4em\relax IEEE, 2006, pp. 866--872.

\bibitem{Kav}
L.~E. Kavraki, P.~Svestka, J.-C. Latombe, and M.~H. Overmars, ``Probabilistic
  roadmaps for path planning in high-dimensional configuration spaces,''
  \emph{IEEE Transactions on Robotics and Automation}, vol.~12, no.~4, pp.
  566--580, 1996.

\bibitem{Lav}
S.~LaValle, ``Rapidly-exploring random trees: A new tool for path planning,''
  \emph{Research Report 9811}, 1998.

\bibitem{SELMANAJ20171}
\BIBentryALTinterwordspacing
D.~Selmanaj, M.~Corno, G.~Panzani, and S.~M. Savaresi, ``Vehicle sideslip
  estimation: A kinematic based approach,'' \emph{Control Engineering
  Practice}, vol.~67, pp. 1--12, 2017. [Online]. Available:
  \url{https://www.sciencedirect.com/science/article/pii/S0967066117301491}
\BIBentrySTDinterwordspacing

\bibitem{6579735}
J.-H. Yoon and H.~Peng, ``Robust vehicle sideslip angle estimation through a
  disturbance rejection filter that integrates a magnetometer with gps,''
  \emph{IEEE Transactions on Intelligent Transportation Systems}, vol.~15,
  no.~1, pp. 191--204, 2014.

\bibitem{9400377}
X.~Xia, E.~Hashemi, L.~Xiong, A.~Khajepour, and N.~Xu, ``Autonomous vehicles
  sideslip angle estimation: Single antenna gnss/imu fusion with observability
  analysis,'' \emph{IEEE Internet of Things Journal}, vol.~8, no.~19, pp.
  14\,845--14\,859, 2021.

\bibitem{Cornell1}
M.~Gee and A.~Vladimirsky, ``Optimal path-planning with random breakdowns,''
  \emph{IEEE Control Systems Letters}, vol.~6, pp. 1658--1663, 2021.

\bibitem{Dav}
H.~T. Davis, \emph{Introduction to nonlinear differential and integral
  equations}.\hskip 1em plus 0.5em minus 0.4em\relax US Atomic Energy
  Commission, 1960.

\bibitem{Flem}
W.~H. Fleming and H.~M. Soner, \emph{Controlled Markov processes and viscosity
  solutions}.\hskip 1em plus 0.5em minus 0.4em\relax Springer Science \&
  Business Media, 2006, vol.~25.

\bibitem{Prot}
M.~H. Protter and H.~F. Weinberger, \emph{Maximum principles in differential
  equations}.\hskip 1em plus 0.5em minus 0.4em\relax Prentice Hall, Englewood
  Cliffs, 1967.

\bibitem{Camilli1}
F.~Camilli, O.~Ley, P.~Loreti, and V.~D. Nguyen, ``Large time behavior of
  weakly coupled systems of first-order hamilton--jacobi equations,''
  \emph{Nonlinear Differential Equations and Applications NoDEA}, vol.~19, pp.
  719--749, 2012.

\bibitem{Cagn}
F.~Cagnetti, D.~Gomes, and H.~V. Tran, ``Adjoint methods for obstacle problems
  and weakly coupled systems of pde,'' \emph{ESAIM: Control, Optimisation and
  Calculus of Variations}, vol.~19, no.~3, pp. 754--779, 2013.

\bibitem{Mitake1}
H.~Mitake, A.~Siconolfi, H.~V. Tran, and N.~Yamada, ``A lagrangian approach to
  weakly coupled hamilton--jacobi systems,'' \emph{SIAM Journal on Mathematical
  Analysis}, vol.~48, no.~2, pp. 821--846, 2016.

\bibitem{Davini1}
A.~Davini and M.~Zavidovique, ``Aubry sets for weakly coupled systems of
  hamilton--jacobi equations,'' \emph{SIAM Journal on Mathematical Analysis},
  vol.~46, no.~5, pp. 3361--3389, 2014.

\bibitem{Engler}
H.~Engler and S.~M. Lenhart, ``Viscosity solutions for weakly coupled systems
  of hamilton-jacobi equations,'' \emph{Proceedings of the London Mathematical
  Society}, vol.~3, no.~1, pp. 212--240, 1991.

\bibitem{Ishi}
H.~Ishii and S.~Koike, ``Viscosity solutions for monotone systems of
  second--order elliptic pdes,'' \emph{Communications in partial differential
  equations}, vol.~16, no. 6-7, pp. 1095--1128, 1991.

\bibitem{Sethian1}
J.~A. Sethian, ``A fast marching level set method for monotonically advancing
  fronts.'' \emph{proceedings of the National Academy of Sciences}, vol.~93,
  no.~4, pp. 1591--1595, 1996.

\bibitem{Kao}
C.-Y. Kao, S.~Osher, and Y.-H. Tsai, ``Fast sweeping methods for static
  hamilton--jacobi equations,'' \emph{SIAM journal on numerical analysis},
  vol.~42, no.~6, pp. 2612--2632, 2005.

\bibitem{10342800}
C.~Parkinson and K.~Polage, ``An efficient semi-real-time algorithm for path
  planning in the hamilton–jacobi formulation,'' \emph{IEEE Control Systems
  Letters}, vol.~7, pp. 3621--3626, 2023.

\bibitem{parkinson2023efficient}
C.~Parkinson and I.~Boyle, ``Efficient and scalable path-planning algorithms
  for curvature constrained motion in the hamilton-jacobi formulation,'' 2023.

\bibitem{Caboussat}
A.~Caboussat, R.~Glowinski, and T.-W. Pan, ``On the numerical solution of some
  eikonal equations: An elliptic solver approach,'' \emph{Chinese Annals of
  Mathematics, Series B}, vol.~36, no.~5, pp. 689--702, 2015.

\bibitem{Yang}
Y.~Yang, W.~Hao, and Y.-T. Zhang, ``A continuous finite element method with
  homotopy vanishing viscosity for solving the static eikonal equation,''
  \emph{Communications in Computational Physics}, vol.~31, no.~5, pp.
  1402--1433, 2022.

\bibitem{guermond2008fast}
J.-L. Guermond, F.~Marpeau, and B.~Popov, ``A fast algorithm for solving
  first-order pdes by l1-minimization,'' \emph{Communications in Mathematical
  Sciences}, vol.~6, no.~1, pp. 199--216, 2008.

\bibitem{Flad}
D.~Flad, A.~Pradhan, and S.~Murman, ``Arbitrary order solutions for the eikonal
  equation using a discontinuous galerkin method,'' \emph{arXiv preprint
  arXiv:2108.05950}, 2021.

\bibitem{Morton}
K.~W. Morton, \emph{Revival: Numerical solution of convection-diffusion
  problems (1996)}.\hskip 1em plus 0.5em minus 0.4em\relax CRC Press, 2019.

\bibitem{Cartee}
E.~Cartee, A.~Farah, A.~Nellis, J.~Van~Hook, and A.~Vladimirsky, ``Quantifying
  and managing uncertainty in piecewise-deterministic markov processes,''
  \emph{SIAM/ASA Journal on Uncertainty Quantification}, vol.~11, no.~3, pp.
  814--847, 2023.

\bibitem{Visc}
M.~G. Crandall, L.~C. Evans, and P.-L. Lions, ``Some properties of viscosity
  solutions of hamilton-jacobi equations,'' \emph{Transactions of the American
  Mathematical Society}, vol. 282, no.~2, pp. 487--502, 1984.

\bibitem{Roquejoffre}
\BIBentryALTinterwordspacing
G.~Namah and J.-M. Roquejoffre, ``Remarks on the long time behavior of the
  solutions of hamilton-jacobi equations,'' \emph{Communications in Partial
  Differential Equations}, vol.~24, no. 5-6, pp. 883--893, 1999. [Online].
  Available: \url{https://doi.org/10.1080/03605309908821451}
\BIBentrySTDinterwordspacing

\bibitem{Brooks1}
\BIBentryALTinterwordspacing
A.~N. Brooks and T.~J.~R. Hughes, ``Streamline upwind/{P}etrov-{G}alerkin
  formulations for convection dominated flows with particular emphasis on the
  incompressible {N}avier-{S}tokes equations,'' \emph{Comput. Methods Appl.
  Mech. Engrg.}, vol.~32, no. 1-3, pp. 199--259, 1982, fENOMECH ''81, Part I
  (Stuttgart, 1981). [Online]. Available:
  \url{https://doi.org/10.1016/0045-7825(82)90071-8}
\BIBentrySTDinterwordspacing

\bibitem{Johnson}
C.~Johnson, \emph{Numerical solution of partial differential equations by the
  finite element method}.\hskip 1em plus 0.5em minus 0.4em\relax Cambridge
  University Press, Cambridge, 1987.

\bibitem{Bank1}
R.~E. Bank, P.~S. Vassilevski, and L.~T. Zikatanov, ``Arbitrary dimension
  convection--diffusion schemes for space-time discretizations,'' \emph{Journal
  of Computational and Applied Mathematics}, vol. 310, pp. 19--31, 2017.

\bibitem{Bank2}
R.~E. Bank and R.~K. Smith, ``An algebraic multilevel multigraph algorithm,''
  \emph{SIAM Journal on Scientific Computing}, vol.~23, no.~5, pp. 1572--1592,
  2002.

\bibitem{Bank3}
\BIBentryALTinterwordspacing
R.~E. Bank. (2015) Multigraph 2.1. [Online]. Available:
  \url{https://ccom.ucsd.edu/\~{}reb/software.html}
\BIBentrySTDinterwordspacing

\bibitem{Camilli}
\BIBentryALTinterwordspacing
F.~Camilli and P.~Loreti, ``{Comparison results for a class of weakly coupled
  systems of eikonal equations},'' \emph{Hokkaido Mathematical Journal},
  vol.~37, no.~2, pp. 349 -- 362, 2008. [Online]. Available:
  \url{https://doi.org/10.14492/hokmj/1253539559}
\BIBentrySTDinterwordspacing

\end{thebibliography}

\end{document}